\newenvironment{Figure}
  {\par\medskip\noindent\minipage{\linewidth}}
  {\endminipage\par\medskip}
\newtheorem{definition}{Definition}[section]
\newtheorem{proposition}{Proposition}[section]
\newtheorem{lemma}{Lemma}[section]
\newtheorem{theorem}{Theorem}[section]
\theoremstyle{remark}
\begin{document}

\title[Random Fibonacci]{Topology of the Random Fibonacci Tiling Space}

\author[F. G\"ahler]{Franz G\"ahler\textsuperscript{a}}
\author[E. Provido]{Eden Provido\textsuperscript{a,b}}
\address{Fakult\"at f\"ur Mathematik, Universit\"at Bielefeld, 33615 Bielefeld, Germany\textsuperscript{a}}
\email{\{eprovido, gaehler\}@math.uni-bielefeld.de}
\address{Department of Mathematics, Ateneo de Manila University, 1108 Quezon City, Philippines\textsuperscript{b}}

\begin{abstract} 
We look at the topology of the tiling space of locally random Fibonacci substitution, which is
defined as $a\mapsto ba$ with probability $p$, $a\mapsto ab$ with probability $1-p$ and $b\mapsto a$ for $0<p<1$.
We show that its \v{C}ech cohomology group is not finitely generated, in contrast to the case where random substitutions are applied globally.

\noindent PACS: 02.40.Re, 45.30.+s, 61.44.Br
\end{abstract}

\maketitle

\begin{multicols}{2}

\section{Introduction}

In 1998, Anderson and Putnam showed that a substitution tiling space can be represented as inverse limit of an inflation and substitution map on the cellular Anderson-Putnam complex \cite{ap98}. 
The cohomology of the tiling space can then be computed as the direct limit of the homomorphism induced by inflation and substitution on the cohomology of the complex. 
G\"{a}hler and Maloney \cite{gm13} investigated the cohomology of one-dimensional tiling spaces arising from several substitutions which are all acting on the same set of tiles. 
Since each substitution is applied globally, the entropy remains zero and the cohomology remains finitely generated.

The next natural goal is to go one step further by studying mixed substitutions that are randomly applied locally, which makes the entropy positive \cite{gl89}. 
In this study, we use the concept of Rauzy graphs which, as shown in \cite{j10}, is a very convenient tool to relate complexity and cohomology.
For simplicity, we illustrate our method using the random Fibonacci substitution even though it also works in more general cases, such as the random noble means substitutions, which were introduced and studied in 
\cite{bm13, m14}. They are given by
$$ \zeta_m: \left\{
  \begin{array}{lll}
    a & \mapsto & \left\{
                    \begin{array}{ll}
                      ba^m & \mbox{with probability} \phantom{0} p_0 \\
                      aba^{m-1}& \mbox{with probability} \phantom{0} p_1 \\
                      \phantom{LSL^{m-1}} \vdots &  \phantom{0}  \\
                      a^mb & \mbox{with probability} \phantom{0} p_m
                    \end{array}
                  \right.
     \\
    b & \mapsto & a
  \end{array}
\right.$$
where $(p_0,\ldots,p_m)$ is a fixed probability vector, that is $p_i \geq 0$ and $\sum^{m}_{i=0} p_i =1$.
The random Fibonacci substitution is the case where $m=1$. It was introduced by Godr\`{e}che and Luck \cite{gl89} in 1989,
and, together with its generalised form, has been analyzed intensively by Nilsson \cite{n10}.

\section{Preliminaries} \label{sec:intro}

A \emph{tile} is a subset of $\mathbb{R}^n$ that is homeomorphic to a closed unit ball.
A \emph{partial tiling} $T$ is a set of tiles, any two of which intersect only on their boundaries, and the \emph{support of }$T$ is the union of its tiles.
We define a \emph{tiling} as a partial tiling whose support is $\mathbb{R}^n$. 

Any set of tilings of $\mathbb{R}^n$  can be equipped with a metric, in which two tilings are close if, up to a small translation, they agree on a large ball around the origin.
We use the metric defined in \cite{ap98}: for any two tilings $T$ and $T'$ of $\mathbb{R}^n$,
\begin{eqnarray}
 d(T,T') & := & \mbox{inf} \big( \big\{ 1/\sqrt{2} \big\} \cup  \big\{ \epsilon \mbox{\phantom{0}}|\mbox{\phantom{0}} T+u \mbox{ and } T'+v      \nonumber \\
   & & \mbox{ agree on } B_{1/\epsilon}(0) \mbox{ for some } \|u\|,\|v\|<\epsilon \big\} \big) \nonumber 
\end{eqnarray}
where $\|\cdot\|$ is the usual norm on $\mathbb{R}^n$ and $B_r(x)$ is the open ball of radius $r$ centered at $x$ in $\mathbb{R}^n$.
The topology arising from this metric may be generated by the cylinder sets
$$Z_\epsilon (T) = \{T'\in \Omega \mbox{\phantom{0}}|\mbox{\phantom{0}} d(T,T')< \epsilon\}.$$
This means that if $T'$ is in the cylinder set $Z_\epsilon (T)$, then $T$ and $T'$ agree on the ball $B_{1/\epsilon}(0)$ up to translation by $\epsilon$.

\section{The Random Fibonacci Tiling Space}
In this study, we look at a set of tilings in $\mathbb{R}$.
Let $a$ and $b$ be closed intervals of lengths $\lambda=\left ( 1+\sqrt{5}\right)/2$ and 1, respectively. 
Let $\{a,b\}$ be the set of prototiles and let $\tilde{\Omega}$ denote the set of all partial tilings that contain only translates of these prototiles.

Let $0<p<1$. 
Define the random Fibonacci inflation rule as
$$ \zeta: \left\{
  \begin{array}{lll}
    a & \mapsto & \left\{
                    \begin{array}{ll}
                      ba & \mbox{with probability} \phantom{0} p \\
                      ab & \mbox{with probability} \phantom{0} 1-p 
                    \end{array}
                  \right.
     \\
    b & \mapsto & a
  \end{array}
\right. .$$
We say that it is a \emph{local random inflation} because we apply $\zeta$ locally, that is, for each occurrence of the tile $a$ we decide independently which among the $2$ possible realizations to choose.

In particular, $\zeta$ is an inflation map from $\{a,b\}$ to $\tilde{\Omega}$ with inflation constant $\lambda$ where the supports of $\zeta(a)$ and $\zeta(b)$ are $\lambda a$ and $\lambda b$, respectively.
Then $\zeta$ can be extended to a map
$\zeta: \tilde{\Omega} \rightarrow \tilde{\Omega}$
by
$$\zeta(T)=\bigcup_{t+r \in T}(\zeta(t)+\lambda r)$$
where $t\in \{a,b\}$ and $r\in \mathbb{R}$.

The \emph{random Fibonacci tiling space} $\Omega$ is the set of all tilings $T\in \tilde{\Omega}$ such that for any partial tiling $P\subseteq T$ with bounded support, 
we have $P\subseteq \zeta^n(b+r)$ for some $r\in \mathbb{R}$.

The following result is of interest because in the case of the global random substitution \cite{gm13}, for two distinct substitution sequences $s_1$ and $s_2$
the corresponding hulls may be disjoint.

\begin{proposition}
$\Omega$ is connected.
\end{proposition}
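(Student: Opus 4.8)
The plan is to argue by contradiction, combining the translation flow with the self-similar structure of $\Omega$. Suppose $\Omega=U\sqcup V$ with $U,V$ nonempty, open and closed. Since the translation action $(t,T)\mapsto T+t$ is continuous and $\mathbb{R}$ is connected, each orbit $\{T+t : t\in\mathbb{R}\}$ is connected and hence contained entirely in $U$ or in $V$; thus both $U$ and $V$ are translation invariant. As $U$ is open and contains some $T_U$, the definition of the cylinder topology gives an $\epsilon>0$ with $Z_\epsilon(T_U)\subseteq U$, which means there is a finite patch $P_U$ of $T_U$ (its part meeting a ball slightly larger than $B_{1/\epsilon}(0)$) such that every tiling of $\Omega$ containing a translate of $P_U$ lies in $Z_\epsilon(T_U)$, and hence, by translation invariance, in $U$. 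Symmetrically there is a patch $P_V$ forcing membership in $V$.

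The contradiction will come from exhibiting a single tiling of $\Omega$ that contains translated copies of both $P_U$ and $P_V$, since such a tiling would lie in $U\cap V=\varnothing$. Thus the proposition reduces to the following co-occurrence (transitivity) property of the language of $\Omega$: any two legal patches appear together inside one legal patch, which in turn extends to a tiling in $\Omega$. I would isolate this as the heart of the argument.

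To prove co-occurrence I would use primitivity together with the \emph{local} nature of $\zeta$. The incidence matrix of $\zeta$ is the Fibonacci matrix $\bigl(\begin{smallmatrix}1&1\\1&0\end{smallmatrix}\bigr)$, independently of the random choices, and its square is strictly positive, so $\zeta^k(x)$ contains both letters for every $k\ge 2$ and every realization; in particular the number of $b$'s in $\zeta^{m}(b)$ grows without bound. By the defining property of $\Omega$, each of $P_U,P_V$ occurs inside $\zeta^{n_U}(b)$, $\zeta^{n_V}(b)$ for suitable realizations $R_U,R_V$ and exponents $n_U,n_V$. Now fix $m$ so large that $\zeta^{\,m-n}(b)$, with $n=\max(n_U,n_V)$, contains at least two occurrences of the letter $b$. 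Because realizations are chosen independently at each site, I may apply the remaining $\zeta^{n_U}$ using $R_U$ at the first of these $b$'s and $\zeta^{n_V}$ using $R_V$ at the second (and any legal choices elsewhere); the resulting word is a legal realization of $\zeta^{m}(b)$ that contains both $P_U$ and $P_V$. This is precisely the step that \emph{fails} for globally applied substitutions, where all sites must use one and the same realization, which is what explains the contrast noted just before the statement.

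The main obstacle is this co-occurrence step, and specifically making rigorous that independent site choices are admissible (so that ``planting'' $R_U$ at one $b$ and $R_V$ at another really yields an element of the language) and that the resulting finite word extends to an honest tiling of $\Omega$. The first point is exactly the meaning of \emph{local} random inflation; the second is the standard fact that every legal patch lies in some tiling of the hull, obtained by substituting and translating to exhaust $\mathbb{R}$ and passing to a limit by compactness. Granting these, the forced tiling contradicts $U\cap V=\varnothing$, and $\Omega$ is connected.
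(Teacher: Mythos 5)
Your proof is correct, and at the decisive step it takes a genuinely different route from the paper's --- in fact a more careful one. Both arguments share the same skeleton: assume $\Omega=U\sqcup V$ with $U,V$ open and nonempty, use openness to extract cylinder sets, and use connectedness of translation orbits to confine each orbit to one piece. But the paper then takes an \emph{arbitrary} $T_V\in V$ and asserts that the patch $P$ of $T_U$ ``is also contained somewhere in $T_V$.'' That assertion is local indistinguishability (minimality) of the hull, and for a locally random substitution it is false in general: $\Omega$ is not minimal. For instance, the deterministic Fibonacci tiling lies in $\Omega$ (every one of its patches occurs in a suitable realization of some $\zeta^n(b)$), yet it contains no patch of type $abba$, although $abba$ is legal (it occurs in the realization $aabba$ of $\zeta^4(b)$). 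So the paper's ``it is easy to see'' conceals exactly the content that needs proving. You avoid this by never comparing $T_U$ and $T_V$ directly: you make $U$ and $V$ translation invariant, extract a forcing patch for each, and then construct a \emph{third} tiling of $\Omega$ containing translates of both patches, which lands in $U\cap V$. Your co-occurrence step, resting on the locality of the random inflation (independent choices of realization at distinct sites) together with the deterministic, primitive abelianization, is the true heart of the matter; and your observation that this step is precisely what fails for globally applied random substitutions is the right explanation of the contrast the paper alludes to before the proposition.

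Two small points to tighten. First, the planting step as written mixes exponents: within a single realization of $\zeta^m(b)$ every site must be inflated the same number of times, so you cannot literally apply $\zeta^{n_U}$ at one $b$ and $\zeta^{n_V}$ at another when $n_U\neq n_V$. The fix is routine: first promote both patches to a common exponent $n$, using that a word occurring in a realization of $\zeta^k(b)$ also occurs in a realization of $\zeta^{k+j}(b)$ for every $j\geq 2$ (any realization of $\zeta^j(b)$, $j\geq 2$, contains a letter $b$, at which the desired realization of $\zeta^k(b)$ can be planted); then plant the two realizations of $\zeta^n(b)$ at two distinct occurrences of $b$ in a realization of $\zeta^{m-n}(b)$. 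Second, for the extension of the resulting legal word to an element of $\Omega$, one needs extensions on \emph{both} sides; this follows because $\zeta^2(b)$ has both realizations $ab$ and $ba$ available, so a supertile $\zeta^k(b)$ can be grown alternately to the left and to the right, and a compactness/diagonal argument finishes. With these adjustments your argument is complete and, unlike the paper's, fully justified at its key step.
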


\begin{proof}
Suppose that $\Omega$ is not connected.
Then there exist open sets $U,V$ with $U \cap V = \emptyset$ and $\Omega=U \cup V$.

Let $T_U\in U$ and $T_V \in V$.
Since $T_U\in U$ and $U$ is open, there is  
a cylinder set $Z_{1/R}(T_U)$ contained in $U$, for some $R>0$; that is, $T_U \in Z_{1/R}(T_U) \subset U$.
Choose some number $r>R$, and let $P=[B_r(0)]$ be a partial tiling in $T_U$ consisting of all tiles that intersect the ball $B_r(0)$.
It is easy to see that $P$ is also contained somewhere in $T_V$.
Then some translates of $T_V$ agree with $T_U$ on the ball $B_r(0)$.
Consider one such translate of $T_V$, say $T_V-x$ for some $x\in \mathbb{R}$.
Since $r>R$, $T_V-x$ and $T_U$ also agree on the ball $B_R(0)$, which means that $T_V-x \in Z_{1/R}(T_U) \subset U$.

Now $T_V -x \in \mathcal{O}(T_V)$, the translation orbit of $T_V$, which lies in a path component of $\Omega$.
Each path component of $\Omega$ lies in a component of $\Omega$.
Also since a component of $\Omega$  is a connected subspace of $\Omega$, it lies entirely within either $U$ or $V$.
Since $T_V \in V$, the path component containing $\mathcal{O}(T_V)$ must lie in $V$.
Hence $T_V-x \in \mathcal{O}(T_V) \subset V$.

We now have $T_V-x \in U$ and $T_V-x \in V$, which contradicts the assumption that $U \cap V = \emptyset$.
Thus $\Omega$ is connected. 
\end{proof}

\section{Symbolic Random Fibonacci Tilings}

Another way to look at one-dimensional tiling spaces is in terms of symbolic dynamics.


Let our alphabet be $\mathcal{A}=\{a,b\}$. 
A \emph{word} $w$ of length $n$ over $\mathcal{A}$ is a sequence $w_1 w_2 \ldots w_n$ of $n$ letters from $\mathcal{A}$. 
If $u=u_1 u_2 \ldots u_n$ and $v=v_1 v_2 \ldots v_m$ are two words, then we let $uv$ denote the \emph{concatenation} of $u$ and $v$; that is, $uv=u_1 u_2 \ldots u_n v_1 v_2 \ldots v_m$. 
Similarly, for two sets of word $U$ and $V$, we let their product be the set $UV=\{uv:u\in U, v\in V\}$ which contains all possible concatenations. 
We define the set of \emph{bi-infinite words} over $\mathcal{A}$ to be $\mathcal{A}^\mathbb{Z} = \{(w_i)_{i\in \mathbb{Z}}:w_i \in \mathcal{A}\}$, and 
let $S:\mathcal{A}^\mathbb{Z}\rightarrow \mathcal{A}^\mathbb{Z}$ be the \emph{shift operator}, $S((w_i)_{i\in \mathbb{Z}})=(w_{i+1})_{i\in \mathbb{Z}}$.

Consider the substitution $\zeta$, where $a$ and $b$ are letters instead of intervals. 
As before, $\zeta$ is randomly applied locally.
Define the \emph{symbolic random Fibonacci tiling space} $\Sigma$ as the set of all elements of $\mathcal{A}^\mathbb{Z}$ such that for any $w \in \Sigma$ and for any subword $u$ of $w$, 
there exists a power $k\in \mathbb{N}$ 
where $u$ is a subword of some realisation of $\zeta^k(b)$. Clearly, $\Sigma$ is totally disconnected, compact, and shift-invariant.

Now, define the \emph{suspension} $(\Omega,\phi)$ of $(\Sigma,S)$ by
$$\Omega=\Sigma \times \mathbb{R}/\sim,$$
where $\sim$ is the equivalence relation generated by the relations $(w,t)\sim (Sw,t-1)$ for all $w\in \Sigma$ and all $t\in \mathbb{R}$.
The map $\phi$ is an action of $\mathbb{R}$ defined by
$\phi_s([(w,t)]):=[(w,t+s)].$
We have that $(\Omega,\phi)$ is an $\mathbb{R}$-dynamical system.

It is easy to see that $\Omega$ is homeomorphic to the random Fibonacci tiling space defined previously, justifying the use of the same notation.
Moreover, the cohomology groups do not depend on the length of the tiles, which allows us to work on the symbolic level.

\section{Rauzy Graphs and their Inverse Limit}
Before we can define a Rauzy graph, we need a finite method for finding the set of subwords of the words in the symbolic random Fibonacci tiling space $\Sigma$.

If we let $\zeta$ act on letter $b$ repeatedly, it yields an infinite sequence of words $r_n=\zeta^{n-1}(b)$. 
We call $r_n$ an \emph{inflated word in generation} $n$, and 
let $A_n$ be the set of all inflated words in generation $n$.
It can be defined recursively as follows: 
let $A_0:=\emptyset$, $A_1:=\{b\}$ and $A_2:=\{a\}$, and for $n\geq 3$,
$A_n:=A_{n-1}A_{n-2}\cup A_{n-2}A_{n-1}$.
Let the \emph{subword set} $F(S,n)$ be the set of all subwords of length $n$ of the words in a set $S$. 
In \cite{n10}, it is shown that, for $n\geq 4$, $$F(A_{n+1},f_n)=F(\Sigma,f_n)$$
where the numbers $f_n$ are Fibonacci numbers.

Let us now define the Rauzy graphs associated with $\Sigma$.
The following are based on the definitions and results in \cite{j10}.

\begin{definition}
 Let $F_n:=F(\Sigma,n)$, the set of all subwords of length $n$ of the words in $\Sigma$. 
 The $n$th Rauzy graph $R_n$ associated with $\Sigma$ is an oriented graph with set of vertices $V_n=F_n$ and set of oriented edges $E_n$, where 
 the oriented edges are defined as follows: there is an edge from $u_1 u_2 \ldots u_n \in V_n$ to $v_1 v_2 \ldots v_n \in V_n$ if $u_i=v_{i-1}$
 for all $2\leq i \leq n$ and $u_1 u_2 \ldots u_nv_n \in F_{n+1}$.
\end{definition}

For any two vertices $u$ and $v$, there is a path from $u$ to $v$ and a path from $v$ to $u$ as well, and
so the Rauzy graphs $R_n$ associated with $\Sigma$ are connected by oriented paths, that is, they are \emph{strongly connected}.

Moreover, the Rauzy graphs are not just combinatorial objects but are also metric spaces. 
We will consider $R_n$ as the disjoint union of copies of the interval $[0,1]$ indexed over the edges modulo indentification of endpoints:
$$R_n:=\coprod[0,1]_{(xfy)}/\sim$$
where $xfy\in F_{n+1}$ and $[0,1]_{(xfy)}$ is an edge with tail $xf$ and head $fy$. 
Two endpoints are identified if they stand for the same subword.

We now show that the tiling space $\Omega$ is an inverse limit of Rauzy graphs. 
Clearly, a word of length $n+1$ can be shortened to a word of length $n$.
This fact allows us to define projection maps $R_{n+1}\rightarrow R_n$, which we
need to build inverse limits.

\begin{definition} Define $\gamma_n: R_{n+1}\rightarrow R_n$ in the following way.

\noindent If $n$ is even, $\gamma_n$ is defined by
\begin{enumerate}
 \item $\gamma_n(u:=u_1 u_2 \ldots u_{n+1})=u_1 u_2 \ldots u_n$ if $u\in V_{n+1}$; 
 \item if an edge $e\in E_{n+1}$ goes from $u$ to $v$, then there exists in $R_n$ an edge from $\gamma_n(u)$ to $\gamma_n(v)$;
       the application $\gamma_n$ then maps $e$ to that edge, and its restriction on $e$ is the identity map on the segment $[0,1]$.
\end{enumerate}
If $n$ is odd, $\gamma_n$ is similarly defined by
\begin{enumerate}
 \item $\gamma_n(u:=u_1 u_2 \ldots u_{n+1})=u_2 \ldots u_{n+1}$ if $u\in V_{n+1}$;
 \item the definition of $\gamma_n$ on the edges is exactly the same as above.
\end{enumerate}
\end{definition}

\begin{Figure}
\centering
\begin{tabular}{ c c }
\begin{tabular}{ c }
  $R_1$ \\
  $p(1)=2$ \\
  $s(1)=2$ \\
\end{tabular} &
\begin{tabular}{ c }
  \includegraphics[scale=.7]{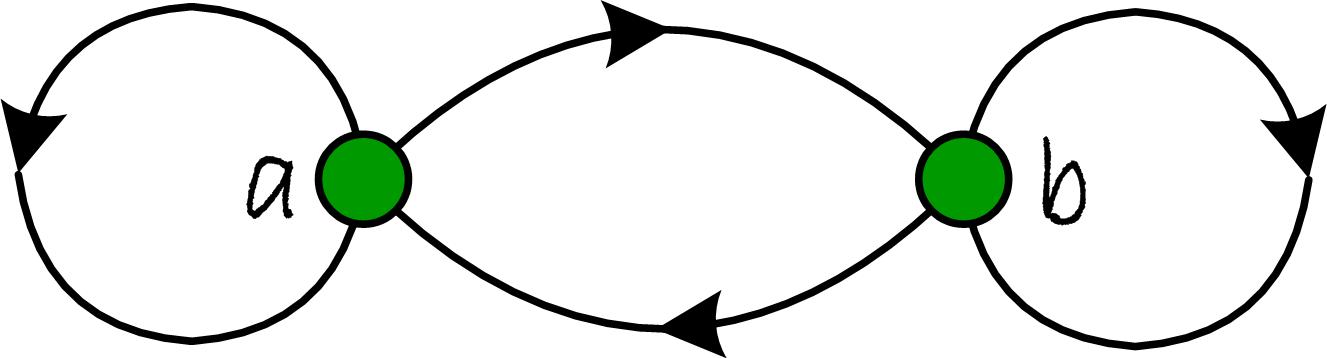} \\
\end{tabular}
  \\
\begin{tabular}{ c }
  $R_2$ \\
  $p(2)=4$ \\
  $s(2)=3$ \\
\end{tabular} &
\begin{tabular}{ c }
  \includegraphics[scale=.7]{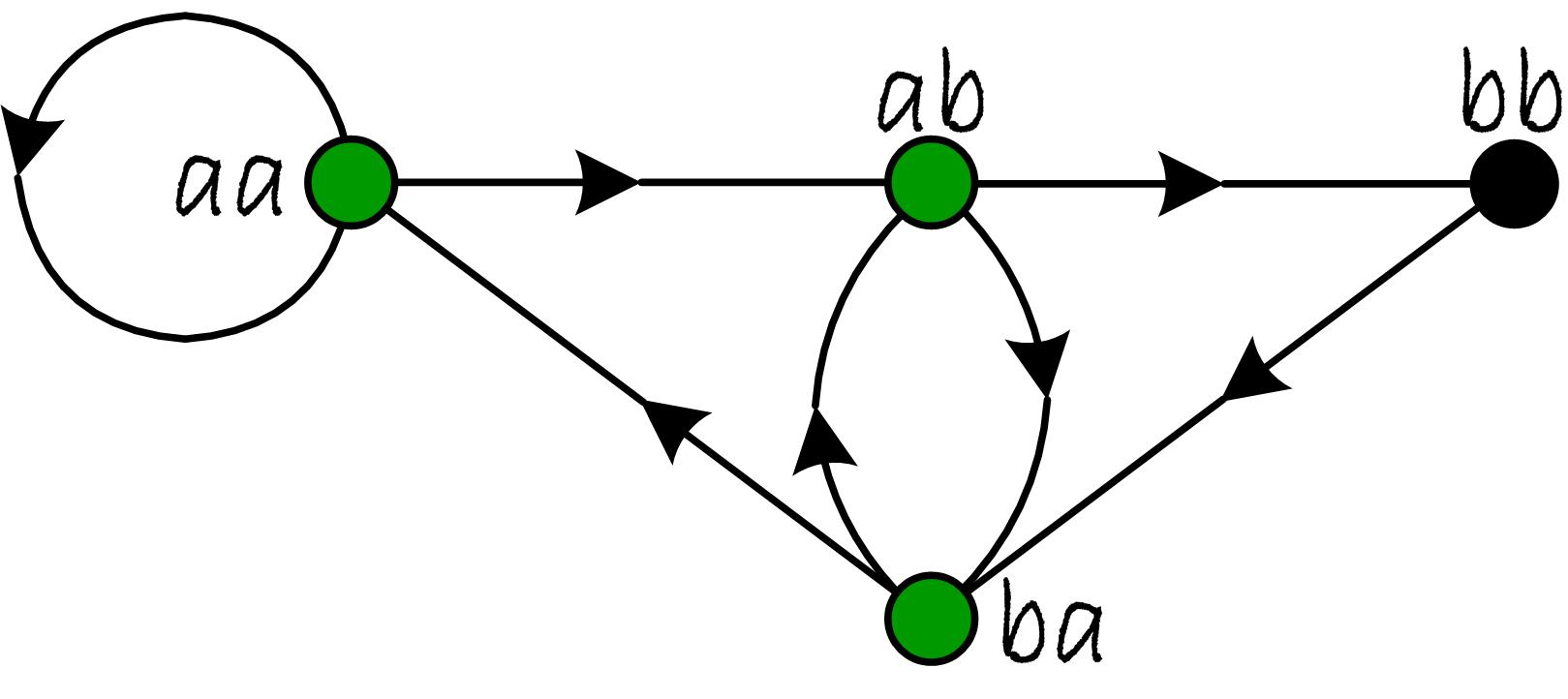} \\
\end{tabular}
  \\
\begin{tabular}{ c }
  $R_3$ \\
  $p(3)=7$ \\
  $s(3)=6$ \\
\end{tabular} &
\begin{tabular}{ c }
  \includegraphics[scale=.7]{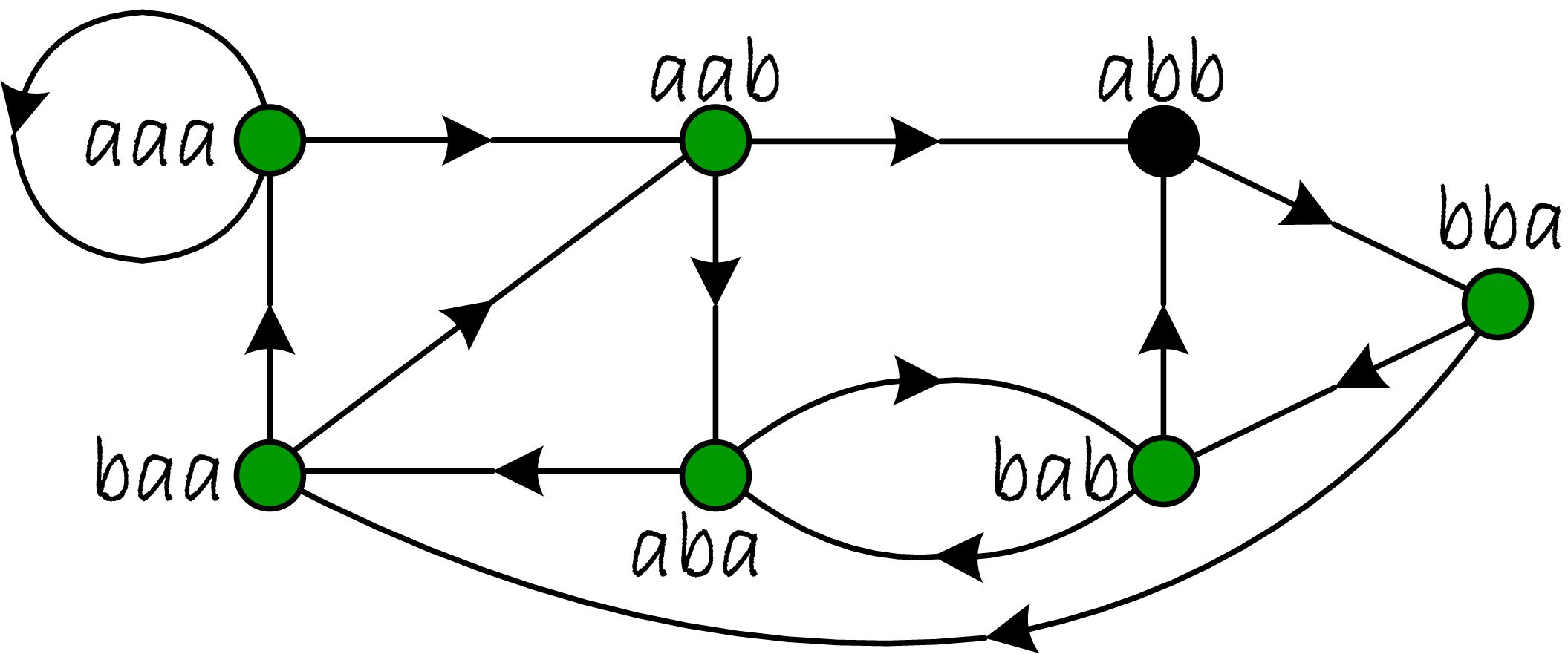} \\
\end{tabular}
  \\
\end{tabular}
\captionof{figure}{For $n=1,2,3$, we illustrate the Rauzy graphs $R_n$, and also give their corresponding complexity $p(n)$ and the first difference of their complexity 
$s(n)$ both of which are introduced in Section 6. The grey nodes correspond to the right special subwords.}
\end{Figure}

The proof of the following proposition is straightforward.

\begin{proposition}\label{qou}
The maps $\gamma_n$ are surjective, continuous, open maps. 
\end{proposition}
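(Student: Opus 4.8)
The plan is to verify the three properties separately, exploiting that by construction $\gamma_n$ restricts to the identity $[0,1]\to[0,1]$ on every edge and sends vertices to vertices; thus $\gamma_n$ is a morphism of the geometric realizations that is a homeomorphism on each closed edge. Throughout I would work with the quotient description $R_{n+1}=\coprod[0,1]_e/\!\sim$ and with the basis of open sets consisting of open subintervals of edge interiors together with the open stars $\mathrm{St}(w)$ of the vertices $w\in V_{n+1}$.

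For continuity I would invoke the universal property of the quotient topology. Precomposing $\gamma_n$ with the quotient map out of the coproduct $\coprod[0,1]_e$ gives a map that is continuous on each summand (the identity onto a target edge followed by the inclusion into $R_n$), hence continuous on the coproduct; it descends to $R_{n+1}$ once one checks that endpoints identified in $R_{n+1}$ have the same image, which holds because any two edge-endpoints representing the same vertex $u\in V_{n+1}$ are both sent to the single vertex $\gamma_n(u)\in V_n$, and that the edge map and vertex map agree on endpoints (the tail, resp. head, of $e$ goes to the tail, resp. head, of its image edge), which is immediate from the parity-dependent letter deletion. For surjectivity it suffices to hit every closed edge, since these cover $R_n$. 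An edge of $R_n$ is a word $s\in F_{n+1}$; because $\Sigma$ consists of bi-infinite words, $s$ extends to a word of $F_{n+2}$ (to the right when $n$ is even, to the left when $n$ is odd), and the corresponding edge of $R_{n+1}$ maps onto $s$ by the identity on $[0,1]$. This is exactly the extendability underlying the strong connectivity of the $R_n$ already recorded above.

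The substantive point is openness, and I would reduce it to showing that $\gamma_n$ carries each basic open set to an open set. On an open subinterval of an edge interior $\gamma_n$ is the identity onto an open subinterval of the target edge, so this case is routine. The remaining and decisive case is the image of a vertex star $\mathrm{St}(w)$: one must show it is a neighborhood of $\gamma_n(w)$, i.e. that every half-edge (germ) at $\gamma_n(w)$, incoming and outgoing, is the image of a germ at $w$. Since $\gamma_n$ preserves the tail/head incidence, this becomes a combinatorial statement about extensions of the subword $\gamma_n(w)$: for $n$ even one deletes the last letter, so the outgoing germ is governed by the right-extension already present in $w$ while the incoming germs correspond to admissible left-extensions, and for $n$ odd the two roles are interchanged. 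The hard part will be precisely the behaviour at the special (branching) vertices, namely the right-special words when $n$ is even and the left-special words when $n$ is odd, where $\gamma_n(w)$ carries several germs on the deleted side; here I would bring in the explicit structure of the special words of $\Sigma$ coming from the recursion $A_n=A_{n-1}A_{n-2}\cup A_{n-2}A_{n-1}$ together with the equality $F(A_{n+1},f_n)=F(\Sigma,f_n)$, arranging the parity convention so that the germ on the deleted side of $\gamma_n(w)$ is matched by a germ at $w$. Establishing this germ-by-germ correspondence at the special vertices, compatibly with the alternation of the letter deleted, is where the real work lies; the edge-interior and non-special-vertex cases are straightforward.
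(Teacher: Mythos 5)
Your arguments for continuity and surjectivity are correct and complete in outline; note that the paper itself offers no proof at all (it declares the proposition straightforward), so on those two points there is nothing to compare against, and what you wrote is presumably what the authors had in mind. The genuine problem is the third property, and it sits exactly in the step you postpone as ``where the real work lies'': the germ-by-germ matching at special vertices cannot be arranged, because it is false. Observe that $\gamma_n$ sends tail germs to tail germs and head germs to head germs, and that on the deleted side the germ map is \emph{constant}: for $n$ even, every outgoing edge $w_1\cdots w_{n+1}z\in F_{n+2}$ of a vertex $w=w_1\cdots w_{n+1}$ is mapped (delete the last letter) to one and the same edge $w_1\cdots w_{n+1}\in F_{n+1}$ of $R_n$, and dually, for $n$ odd every incoming edge $zw\in F_{n+2}$ of $w$ is mapped (delete the first letter) to the single edge $w$. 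Hence if $\gamma_n(w)$ is right-special ($n$ even) or left-special ($n$ odd), the image of every sufficiently small open star about $w$ contains the vertex $\gamma_n(w)$ but misses the germs along all the other outgoing (resp.\ incoming) edges at $\gamma_n(w)$; since a neighbourhood of a vertex must contain a segment of every incident edge-end, that image is not open. Concretely, take $n=1$ and $w=ab\in V_2$, so $\gamma_1(w)=b$: the incoming edges $aab,bab$ of $w$ both map to the edge $ab$ of $R_1$, the outgoing edges $aba,abb$ map to $ba,bb$, and the incoming germ of the loop $bb$ at $b$ is never reached, because the only edge of $R_2$ mapping to $bb$ is $abb$, whose head is the vertex $bb$, not $ab$. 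So $\gamma_1(\mathrm{St}(ab))$ is not open. Since right- and left-special words of every length exist (here $s(n)\to\infty$, and already for the deterministic Fibonacci $s(n)=1$), no input from the recursion $A_n=A_{n-1}A_{n-2}\cup A_{n-2}A_{n-1}$, from $F(A_{n+1},f_n)=F(\Sigma,f_n)$, or from Lemma \ref{wb} can close this gap: the covering statement you would need fails for every $n$, so openness, read literally, is not provable from these definitions.

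The good news is that openness is never actually needed downstream: what Proposition \ref{inj} uses from Proposition \ref{qou} is only that $\gamma_n$ is a quotient map whose pullback is injective on cochains. This follows from the two properties you did establish: $\gamma_n$ is a continuous surjection from the compact space $R_{n+1}$ onto the Hausdorff space $R_n$, hence a closed map, hence a quotient map; and since $\gamma_n$ takes vertices to vertices and maps each closed edge homeomorphically onto a closed edge, surjectivity makes $\gamma_n^*$ injective on cellular cochains. I would therefore replace your openness section by this compactness argument and treat the word ``open'' in the statement as an overstatement, to be weakened to ``closed'' (equivalently, ``quotient''), which is the property the rest of the paper relies on.
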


Let us now consider the inverse limits of the Rauzy graphs with respect to the maps $\gamma_n$.

\begin{definition}
The inverse limit of the complex $(R_n,\gamma_n)_{n\in \mathbb{N}}$ is the following subset of $\prod_{n\in \mathbb{N}}R_n$:
$$\Big \{ (x_1,x_2,\ldots)\in \prod_{n\in \mathbb{N}}R_n \; \Big \vert \; \forall n\in \mathbb{N}, \gamma_n(x_{n+1})=x_n  \Big \}$$
which is denoted by ${\displaystyle\lim_{\longleftarrow}(R_n,\gamma_n)_{n\in \mathbb{N}}}$.
\end{definition}

The inverse limit is a topological space with the relative topology induced by the product topology of $\prod_{n\in \mathbb{N}}R_n$. 
It is a compact set, being a closed subset of a compact set.

The following theorem has been proved in \cite{j10} for the one-dimensional standard substitution tiling spaces. 
That argument can be used with very minimal modification to prove the case for the random Fibonacci tiling space $\Omega$.

\begin{theorem}[Theorem 6.10 in \cite{j10}] \label{invlim}
Let $\Omega$ denote the suspension of $(\Sigma,S)$. 
Then we have the following homeomorphism:
$$\Omega \cong \lim_{\longleftarrow}(R_n,\gamma_n)_{n\in \mathbb{N}}.$$
\end{theorem}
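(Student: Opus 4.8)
The plan is to exhibit an explicit map $\Phi\colon \Omega \to \lim_{\longleftarrow}(R_n,\gamma_n)$, prove that it is a continuous bijection, and then upgrade it to a homeomorphism using the fact that $\Omega$ is compact, being the suspension of the compact space $\Sigma$, while the inverse limit, as a subspace of a product of metric graphs, is Hausdorff. This follows the scheme of Theorem 6.10 in \cite{j10}; the only point requiring attention in the random setting is that nothing in the argument uses more than the finiteness of each subword set $F_n$, which continues to hold for the random Fibonacci substitution even though the complexity grows without bound in $n$.

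To define $\Phi$, I would represent a point of $\Omega$ by a pair $(w,t)$ with $w=(w_i)_{i\in\mathbb{Z}}\in\Sigma$ and $t\in[0,1)$, so that $t$ records the position of the origin inside the current letter $w_0$. For each $n$ we read off the length-$(n+1)$ window of $w$ centred on $w_0$ — concretely the factor $w_{-k}\cdots w_{-k+n}$ with $k=\lfloor n/2\rfloor$ — which by construction lies in $F_{n+1}$ and hence indexes an edge of $R_n$; we then set $\Phi_n([(w,t)])$ to be the point at parameter $t$ along that edge. Well-definedness under the suspension relation $(w,t)\sim(Sw,t-1)$ is immediate, since shifting $w$ and adjusting $t$ selects the same window and the same position along the same edge. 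When $t$ reaches a tile boundary the current letter becomes ambiguous, but the two candidate edges share the corresponding endpoint in $R_n$, which is precisely the identification built into the graph, so $\Phi_n$ remains single-valued.

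Next I would verify that the sequence $(\Phi_n([(w,t)]))_{n}$ actually lies in the inverse limit, that is, $\gamma_n(\Phi_{n+1}([(w,t)]))=\Phi_n([(w,t)])$. This is exactly where the alternating definition of $\gamma_n$ is used: dropping the last letter for $n$ even and the first letter for $n$ odd is what keeps the window centred on $w_0$ as $n$ grows, so that the window used for $R_{n+1}$ truncates to the window used for $R_n$. Since $\gamma_n$ is the identity on the interior of each edge, the position parameter $t$ is preserved, and coherence follows. Continuity of $\Phi$ is then a soft consequence of the topologies involved: two tilings agreeing on a large ball about the origin choose the same edge, and nearly the same position, in all $R_n$ up to some large index, hence map to nearby points of the product $\prod_n R_n$.

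Finally I would prove bijectivity by constructing the inverse. A coherent sequence $(x_n)$ lies on a nested family of edges whose indices are mutually compatible factors of increasing length, and by the centring convention these factors reconstruct a unique bi-infinite word $w\in\Sigma$; the common position parameter of the $x_n$ stabilises to a value $t\in[0,1)$, yielding a point $[(w,t)]\in\Omega$. Checking that this assignment is a two-sided inverse of $\Phi$ is routine from the definitions. With $\Phi$ a continuous bijection from a compact space to a Hausdorff space, it is a homeomorphism. I expect the main obstacle to be purely bookkeeping: pinning down the centred windows and the alternating truncation so that well-definedness, the boundary (vertex) cases, and the coherence equation all match up simultaneously; once the indexing conventions are fixed, each individual verification is short.
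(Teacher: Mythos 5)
Your proposal is correct and is essentially the argument the paper relies on: the paper gives no proof of its own but cites Theorem~6.10 of \cite{j10}, noting it carries over with minimal modification, and your explicit construction (centred windows of length $n+1$ indexing edges of $R_n$, the alternating truncation matching the parity convention in $\gamma_n$, coherence, and the compact-to-Hausdorff continuous bijection upgrade) is precisely that argument. Your observation that the random setting only requires finiteness of each $F_n$ and the factorial/extendable structure of the language, not bounded complexity, is exactly the ``very minimal modification'' the paper alludes to.
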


\section{Complexity and Cohomology}
As before, we denote by $F_n=F(\Sigma,n)$ the set of all subwords of length $n$ of the words in $\Sigma$, 
and denote by $F$ the union of $F_n$ for all $n$.
Note that $F$ contains all the subwords of any length of the words in $\Sigma$, 
and for every word $v\in F$ there is a pair of letters $(x,y)$ such that $xvy \in F$.
We define the subword complexity function $p(n)$ as the cardinality of the set $F_n$, 
and let the first difference of the complexity be denoted by $s(n):=p(n+1)-p(n)$.

The number of vertices of the $n$th Rauzy graph $R_n$ is equal to the complexity $p(n)$, 
and its number of edges is $p(n)+s(n)$.

The following proposition from \cite{j10} makes an explicit link between $s(n)$ and the cohomology of the $n$th Rauzy graph $R_n$.

\begin{proposition}[Proposition 6.14 in \cite{j10}] \label{rk}
 The rank of the cohomology $H^1(R_n;\mathbb{Q})$ of the $n$th Rauzy graph equals $s(n)+1$.
\end{proposition}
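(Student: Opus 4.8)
The plan is to compute the rank of $H^1(R_n;\mathbb{Q})$ directly from the cellular structure of $R_n$ as a finite graph, via its Euler characteristic. Recall that $R_n$ is a finite one-dimensional CW complex: its $0$-cells are the vertices $V_n=F_n$ and its $1$-cells are the oriented edges $E_n$, each a copy of $[0,1]$ glued at its endpoints. As remarked above, $R_n$ is strongly connected, and hence connected as a topological space.

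First I would invoke the two standard facts about such a graph. Since $R_n$ has dimension $1$, its only possibly nonzero rational cohomology groups are $H^0$ and $H^1$, so the Euler characteristic may be written as
$$\chi(R_n)=\operatorname{rank}H^0(R_n;\mathbb{Q})-\operatorname{rank}H^1(R_n;\mathbb{Q}).$$
On the other hand, the Euler characteristic is computed from the cell counts as $\chi(R_n)=|V_n|-|E_n|$. Moreover, because $R_n$ is connected, $\operatorname{rank}H^0(R_n;\mathbb{Q})=1$, the rank of $H^0$ being the number of connected components. Combining these gives
$$\operatorname{rank}H^1(R_n;\mathbb{Q})=|E_n|-|V_n|+1.$$

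Then I would substitute the cell counts recorded just before the statement: $R_n$ has $|V_n|=p(n)$ vertices and $|E_n|=p(n)+s(n)$ edges. This yields
$$\operatorname{rank}H^1(R_n;\mathbb{Q})=\bigl(p(n)+s(n)\bigr)-p(n)+1=s(n)+1,$$
which is the desired equality.

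Since every ingredient — the dimension of $R_n$, its connectivity, and the vertex and edge counts — has already been established, I do not expect any genuine obstacle; the content is essentially the cyclomatic-number formula for a connected graph. The only point requiring a little care is to justify that $\operatorname{rank}H^0=1$, i.e. that $R_n$ really is connected and not merely strongly connected in the combinatorial sense. This is routine, however, since strong connectivity of the oriented graph immediately implies that the underlying topological space is path-connected.
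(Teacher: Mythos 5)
Your proposal is correct and complete: the Euler-characteristic (cyclomatic number) computation, together with connectivity of $R_n$ and the cell counts $|V_n|=p(n)$, $|E_n|=p(n)+s(n)=p(n+1)$, gives exactly $\operatorname{rank}H^1(R_n;\mathbb{Q})=s(n)+1$. Note that the paper itself supplies no proof of this proposition --- it is quoted verbatim from Julien's Proposition 6.14 in \cite{j10} --- so there is nothing in the paper to compare against; your argument is the standard one for this fact and is exactly the kind of proof the citation stands in for.
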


We now introduce some useful concepts defined in \cite{c95}.
Every element of length $n$ in $F$ can be extended  in at least one way to an element of length $n+1$ by appending a letter to the right,
we call those elements for which this can be done in two different ways as \emph{right specials}.
Similarly, the words $v\in F$ such that $av \in F$ and $bv \in F$ are called \emph{left specials}.
It is easy to see that $s(n)$ may be interpreted as the number of right special elements of length $n$ in $F$, or
as the number of left special elements of length $n$ in $F$.

If an element $v$ of $F$ is both right special and left special, we say that $v$ is \emph{bispecial}.
There are three kinds of bispecial elements, depending  on the cardinality of the set $I=F\cap (\mathcal{A}v\mathcal{A})$.
When $I$ has four elements, $v$ is a \emph{strong bispecial}, and when it has only two elements, $v$ is a \emph{weak bispecial}.
If we let $sb(n)$ denote the number of strong bispecials of length $n$ and $wb(n)$ the number of weak bispecials of length $n$, then
it can be shown that $s(n+1)-s(n)=sb(n)-wb(n)$.

The following lemma implies that, for the random Fibonacci tiling space $\Omega$, there are no weak bispecials in $F$.
Thus, we have $s(n+1)-s(n)=sb(n)$, which tells us that $s(n)$ is nondecreasing.

\begin{lemma} \label{wb}
Suppose $v \in F$ such that both $va$ and $vb$ ($av$ and $bv$) are in $F$. 
Then, $va$ and $vb$ ($av$ and $bv$) can be legally extended to the left (right) both with an $a$, or both with a $b$.
That is, either $ava,avb \in F$ or $bva,bvb \in F$ ( $ava,bva \in F$ or $avb,bvb \in F$).
\end{lemma}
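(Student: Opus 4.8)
The plan is to prove only the right-special statement---that $va,vb\in F$ forces $ava,avb\in F$ or $bva,bvb\in F$---and to obtain the left-special statement from it for free. The point is that the allowed images $\{ab,ba\}$ of $a$ and the image $a$ of $b$ are all invariant under word reversal, so the reverse $\widetilde{\zeta^k(b)}$ is again a realisation of $\zeta^k(b)$, and hence $F$ is closed under reversal. Reversal carries the right-special statement for $v$ to the left-special statement for the reversed word $\tilde v$: from $av,bv\in F$ one gets $\tilde v a,\tilde v b\in F$, the proved right-special case gives $ava,bva\in F$ or $avb,bvb\in F$ after reversing back, which is exactly the parenthetical claim. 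Thus the two halves are equivalent and it suffices to treat one.

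Next I would record the structural facts that make the substitution readable from the letters alone. Every $b$ occurs only inside an image $\zeta(a)\in\{ba,ab\}$, so each $b$ is paired with an adjacent $a$ into a unique $a$-block, while the remaining $a$'s are images of $b$. From this one reads off immediately that $bbb\notin F$ (a middle $b$ could belong to no $a$-block) and that every occurrence of $bb$ sits inside $abba$. These facts let me parse any occurrence of $v,va,vb$ into $\zeta$-blocks and thereby \emph{desubstitute} it, replacing $v$ (up to trimming a boundary block) by a realisation of $\zeta(v^-)$ for a shorter word $v^-$.

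The proof then runs by induction on $|v|$. For the base cases I would invoke $F(A_{n+1},f_n)=F(\Sigma,f_n)$ to list the finitely many short right-special words and check the claim directly; for example $v=a,aa,b,ba$ all admit the common left letter $a$, whereas $v=aaa$ forces the common left letter $b$, since $aaaaa\notin F$ (its three interior $a$'s would desubstitute to the forbidden $bbb$) while $baaaa,baaab\in F$. This already shows that the correct letter genuinely depends on $v$, so no single choice works uniformly. In the inductive step I would apply the hypothesis to the desubstituted right-special word $v^-$, where $va$ and $vb$ desubstitute to right extensions $v^-a'$ and $v^-b'$, obtaining a letter $d$ compatible with both; I would then push this common left extension forward through $\zeta$, noting that the letter in front of $\zeta(v^-)$ is the last letter of $\zeta(d)$, and read off the required common left letter $c$ from $d$ and the chosen realisation of $\zeta(d)$.

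The main obstacle is precisely this last transfer. Because $\zeta$ is applied randomly and locally, desubstitution is not unique---the ambiguity already appears for $aba$, where the central $a$ may or may not form an $a$-block---so I cannot simply invert $\zeta$. The real work is a careful case analysis, organised by the first and last letters of $v$ and by which block each boundary letter belongs to, showing that in every case some left letter $c$ is compatible with \emph{both} right extensions simultaneously. Here the freedom to read $\zeta(a)$ as either $ba$ or $ab$ supplies the needed flexibility, while the forbidden pattern $bbb$ and the forced frame $abba$ rule out the only two ways the lemma could fail, namely the crossed configurations $ava,bvb\in F$ with $avb,bva\notin F$ and its mirror $bva,avb\in F$ with $ava,bvb\notin F$.
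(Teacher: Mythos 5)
Your preliminary observations are correct: the realisation sets $A_n$ (and hence $F$) are closed under word reversal, so the left-special statement does follow from the right-special one; the structural facts ($bbb \notin F$, $bb$ forced inside $abba$, $aaaaa \notin F$) are accurate; and your base-case computations, including the instructive example $v=aaa$ where the common left letter must be $b$, are right. You also correctly identify that the local randomness of $\zeta$ is what must drive the proof. Nevertheless, the proposal is not a proof: the entire inductive step is deferred to ``a careful case analysis'' that is never carried out, and the way you set it up cannot work as stated. You assume that the occurrences witnessing $va \in F$ and $vb \in F$ desubstitute to right extensions $v^-a'$ and $v^-b'$ of a \emph{common} shorter word $v^-$. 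That is exactly what fails in general: the two extensions are witnessed by different occurrences of $v$, possibly in different infinite words and with incompatible supertile parsings; moreover, a given occurrence of $v$ need not begin or end on a supertile boundary, so ``the desubstitution of $v$'' is not even well defined, let alone common to both witnesses. Since this transfer step is the whole content of the lemma, the gap sits precisely at the point you yourself flag as the main obstacle.

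The paper's proof shows that the missing idea is not an induction at all, but a single ``swap'' argument. If $v$ is right special, some occurrence of $v$ must end on a supertile boundary: at a non-boundary occurrence the last letter of $v$ is the first letter of a supertile $[ab]$ or $[ba]$, which forces a unique continuation determined by that last letter alone, so if every occurrence were non-boundary there would be only one continuation. At such a boundary occurrence the following supertile can be taken to be $[ab]$ (and hence also $[ba]$); if only $[a]$ could follow, again only one extension would arise. Now fix one infinite word $w$ containing this occurrence of $v$ followed by $[ab]$. Because the substitution is applied locally and independently, the realisation $[ab]$ can be exchanged in place for $[ba]$, producing another legal word $w'$ that agrees with $w$ everywhere else --- in particular to the left of $v$. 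The letter immediately preceding this one occurrence of $v$ therefore extends both $va$ and $vb$ to the left, which is the claim. This is where ``the freedom to read $\zeta(a)$ as either $ab$ or $ba$'' enters: not diffusely through a case analysis over parsings, but as one in-place exchange at one well-chosen occurrence. Any repair of your inductive scheme would need this observation anyway, at which point the induction becomes superfluous.
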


\begin{proof}
We prove the case where $v$ is first extended to the right. The other case is analogous.

As a legal word, $v$ occurs somewhere in an infinite word $w$ coming from a substitution. 
Thus, the word $w$ has a partitioning into first order supertiles $[ab]$, $[ba]$, and $[a]$. 
Different occurrencies of $v \subset w$ may have different partionings into supertiles, 
but if $v$ is extensible to the right in two ways, there must be an occurrence of $v \subset w$ which ends on a supertile boundary. 
Otherwise, only one continuation would be possible. 
As $v$ can be extended to the right in two ways, it must be possible 
to extend it with a supertile $[ab]$, and thus also with $[ba]$. 
If only $[a]$ was possible, there would only be one way to extend. 
Now take an infinite word $w$ with a fixed occurrence of $v[ab]$. 
As $w$ is random substitution word, we can swap the pair $[ab]$
to the right of $v$, leaving the rest of $w$ unchanged, and obtain 
another legal word $w'$, containing $v[ba]$ instead. As to the left 
of $v$ both $w$ and $w'$ agree, the claim follows.
\end{proof}

In Theorem \ref{invlim}, we have seen that the suspension space $\Omega$ is homeomorphic to the inverse limit of the Rauzy graphs.
This implies that the $\check{\mbox{C}}$ech cohomology of $\Omega$ is the direct limit of the cohomology of the Rauzy graphs.
In particular, the $\check{\mbox{C}}$ech cohomology group $H^1(\Omega)$ is isomorphic to the direct limit of the system of abelian groups
$$H^1(R_1)\xlongrightarrow{\gamma_1^*}H^1(R_2)\xlongrightarrow{\gamma_2^*}\cdots\xlongrightarrow{\gamma_{n}^*}H^1(R_{n+1})\xlongrightarrow{\gamma_{n+1}^*}\cdots.$$

We want to show is that for the random Fibonacci tiling space $\Omega$, the maps $\gamma_n^*: H^1(R_n)\rightarrow H^1(R_{n+1})$ are injective. 
The proof of this uses the theory of quotient cohomology, which was introduced in \cite{bs11}.
We now introduce some notation relating to $\check{\mbox{C}}$ech cohomology. 
If $X$ is a topological space with the structure of a CW-complex, let $c$ denote a cell in $X$,
and let $c'$ denote the corresponding cochain. Let $C^d(X)$ denote the group of $d$-cochains of $X$,
and $\delta_d:C^d(X) \rightarrow C^{d+1}(X)$ denote the coboundary map. 

The theory of quotient cohomology applies to topological spaces $X$ and $Y$ for which there is a quotient map $f:X\rightarrow Y$
such that the pullback $f^*$ is injective on cochains. 
Define the cochain group $C^d_Q(X,Y)$ to be the quotient $C^d(X)/f^*(C^d(Y))$, and the
let the quotient coboundary operator to be $\bar{\delta}_d: C^d(X,Y) \rightarrow C^{d+1}(X,Y)$.
Then the quotient cohomology between $X$ and $Y$ is defined to be $H^d_Q(X,Y):=\mbox{ker} \bar{\delta}_d/\mbox{im} \bar{\delta}_{d-1}$.

Then, by the snake lemma, the short exact sequence of cochain complexes
$$0\longrightarrow C^d(Y) \xlongrightarrow{f^*} C^d(X)\longrightarrow C^d(X,Y)\longrightarrow 0$$
induces a long exact sequence
$$\rightarrow H_Q^{d-1}(X,Y)\rightarrow H^{d}(Y) \xrightarrow{f^*} H^{d}(X)\rightarrow H_Q^{d}(X,Y)\rightarrow \cdots.$$

\bigskip

\begin{proposition} \label{inj}
 If $R_n$ are the Rauzy graphs of the random Fibonacci tiling space $\Omega$, the induced maps $$\gamma_n^*: H^1(R_n)\rightarrow H^1(R_{n+1})$$ are injective. 
\end{proposition}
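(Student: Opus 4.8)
The plan is to apply the quotient cohomology machinery to the quotient map $f = \gamma_n \colon R_{n+1} \to R_n$, taking $X = R_{n+1}$ and $Y = R_n$. First I would verify the standing hypothesis that $\gamma_n^*$ is injective on cochains: since $\gamma_n$ is surjective (Proposition \ref{qou}) and is cellular, sending each vertex of $R_{n+1}$ to a vertex of $R_n$ and each edge homeomorphically onto an edge by the identity on $[0,1]$, the pullback $\gamma_n^* \colon C^d(R_n) \to C^d(R_{n+1})$ is injective for $d = 0,1$. This allows the formation of the quotient cochain complex $C^d_Q(R_{n+1},R_n)$ and the use of the long exact sequence
$$0 \to H^0(R_n) \xrightarrow{\gamma_n^*} H^0(R_{n+1}) \to H^0_Q(R_{n+1},R_n) \xrightarrow{\partial} H^1(R_n) \xrightarrow{\gamma_n^*} H^1(R_{n+1}) \to \cdots.$$

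Next I would reduce the statement to a single vanishing. Because the Rauzy graphs are strongly connected, $R_n$ and $R_{n+1}$ are connected, so $H^0(R_n) \cong H^0(R_{n+1}) \cong \mathbb{Z}$ and the leftmost $\gamma_n^*$ is an isomorphism, since it carries constant cochains to constant cochains. Exactness then forces the map $H^0(R_{n+1}) \to H^0_Q(R_{n+1},R_n)$ to vanish, so the connecting homomorphism $\partial$ is injective with image exactly $\ker\!\big(\gamma_n^* \colon H^1(R_n) \to H^1(R_{n+1})\big)$. Hence $\gamma_n^*$ on $H^1$ is injective if and only if $H^0_Q(R_{n+1},R_n) = 0$, and it remains to prove this vanishing.

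To compute $H^0_Q$, observe that a class is represented by a $0$-cochain $\alpha \in C^0(R_{n+1})$, that is, a function on the vertex set $F_{n+1}$, taken modulo $\gamma_n^* C^0(R_n)$, which consists precisely of those $\alpha$ constant on the vertex fibres of $\gamma_n$; the class lies in $\ker\bar\delta_0$ exactly when $\delta_0\alpha \in \gamma_n^* C^1(R_n)$, i.e.\ when $\delta_0\alpha$ is constant on the edge fibres of $\gamma_n$. I would unravel this for $n$ even, the odd case being symmetric with \emph{left} in place of \emph{right}. Two edges of $R_{n+1}$ share an image edge iff, as words of length $n+2$, they agree except in their last letter, so the image edge is a right special word $g$ of length $n+1$ with the two preimages $ga,gb$. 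Writing $\delta_0\alpha$ on $ga$ and $gb$ and cancelling the common tail vertex $g$, the condition becomes that $\alpha$ agrees on the two vertices obtained by deleting the first letter of $g$ and appending $a$ resp.\ $b$; that is, $\alpha$ is constant on the fibre over the right special word $h$ of length $n$ obtained from $g$ by deleting its first letter.

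The key and final step is to promote this to constancy of $\alpha$ on \emph{every} non-trivial vertex fibre. A non-trivial fibre sits over a right special word $h$ of length $n$, and I must produce a right special word $g$ of length $n+1$ from which $h$ arises by deletion, i.e.\ a left extension $xh$ of $h$ that is itself right special. This is exactly what Lemma \ref{wb} supplies: as $h$ is right special, $ha,hb \in F$, and the lemma yields a letter $x$ with $xha,xhb \in F$, so $g = xh$ is right special. The condition above then forces $\alpha(ha) = \alpha(hb)$ on every non-trivial fibre, whence $\alpha$ is constant on all vertex fibres and so lies in $\gamma_n^* C^0(R_n)$; its class vanishes, giving $H^0_Q(R_{n+1},R_n) = 0$. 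I expect the main obstacle to be precisely this last matching of the edge-fibre conditions to vertex-fibre constancy, where the absence of weak bispecials (Lemma \ref{wb}) is indispensable: without it a right special $h$ might fail to lift to a right special $g$, leaving $\alpha$ unconstrained on the corresponding fibre and producing a nonzero class in $H^0_Q$.
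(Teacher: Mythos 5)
Your proposal is correct and takes essentially the same approach as the paper: it sets up the quotient cohomology exact sequence for $\gamma_n\colon R_{n+1}\to R_n$, reduces injectivity of $\gamma_n^*$ to the vanishing $H_Q^0(R_{n+1},R_n)=0$, and proves that vanishing by invoking Lemma \ref{wb} to lift every right (resp.\ left) special word of length $n$ to a special word of length $n+1$, thereby turning the edge-fibre conditions into constancy of the $0$-cochain on all vertex fibres. The only cosmetic differences are that you write out the $n$ even case while the paper writes out $n$ odd, and you phrase the cochain argument via functions constant on fibres rather than the paper's explicit coefficient sums.
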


\begin{proof}
The following proof is adapted from \cite{gm13}.

 Let $X=R_{n+1}$ and $Y=R_n$. 
 By Proposition \ref{qou}, the map $\gamma_n: R_{n+1} \rightarrow R_n$ is a quotient map, and the pullback of this map
 is injective on cochains. Then we have the following exact sequence
 $$0\rightarrow H^{0}(Y)\rightarrow H^{0}(X) \rightarrow H_Q^{0}(X,Y)\rightarrow$$
$$H^{1}(Y)\xlongrightarrow{\gamma_n^*} H^{1}(X) \rightarrow H_Q^{1}(X,Y)\rightarrow 0.$$
 To prove that  $\gamma_n^*$ is injective, we show that $H_Q^{0}(X,Y)=0$.
 To this end, we show that if $x\in C^0(X)$ such that $\bar{\delta}_0(x)\in \gamma_n^*(C^1(Y))$, then $x\in \gamma_n^*(C^0(Y))$.
 
 Suppose $n$ is odd. Then $\gamma_n(u:=u_1 u_2 \ldots u_{n+1})=u_2 \ldots u_{n+1}$. 
 In the following, we use these notations: for some $h\in F_m$, $(*h)':=\sum_{th \in F_{m+1}}th$ and $(h*)':=\sum_{ht \in F_{m+1}}ht$.
 The image of a cochain $h'$ under $\gamma_n^*$ is $(*h)'$.
 Furthermore, supposing $x\in C^0(X)$ and $\bar{\delta}_0(x)\in \gamma_n^*(C^1(Y))$,
 we have $$x=\sum_{rs\in F_{n+1},s\in F_n} n_{rs}(rs)'$$
 and
 $$\bar{\delta}_0(x)=\sum_{uv\in F_{n+1},u\in F_n} m_{uv}(*uv)'$$
 for some $n_{rs},m_{uv}\in \mathbb{Z}$.
 On the other hand, by definition of $\bar{\delta}_0$, we have
 \begin{eqnarray*}
 \bar{\delta}_0(x) & = & \bar{\delta}_0\left(\sum n_{rs}(rs)'\right)=\sum n_{rs}\delta_0(rs)'  \\
                   & = & \sum n_{rs}\left((*rs)'-(rs*)'\right)
\end{eqnarray*}
 where summations are taken over $rs\in F_{n+1},s\in F_n$.
 Taking modulo $\gamma_n^*(C^1(Y))$, we get
\begin{eqnarray}
 \bar{\delta}_0(x) &=& -\sum_{rs\in F_{n+1},s\in F_n} n_{rs}(rs*)' \nonumber \\
   &=& \mbox{\phantom{--.}}\sum_{uv\in F_{n+1},u\in F_n} m_{uv}(*uv)' \nonumber 
\end{eqnarray}
By Lemma \ref{wb}, for some $s \in F$, if $as$ and $bs$ are elements of $F$, then $asa,bsa \in F$ or $asb,bsb \in F$. 
The first case implies that $-n_{as}=m_{sa}$ and $m_{sa}=-n_{bs}$, while from the second case we have $-n_{as}=m_{sb}$ and $m_{sb}=-n_{bs}$.
Thus, in particular, the coefficients $n_{rs}$ depend on $s\in F_n$ alone, and so we have $$x=\sum_{s\in F_n} n_{s}(*s)'.$$
Hence, $x$ lies in $\gamma_n^*(C^0(Y))$.

Similar arguments can be used for the case where $n$ is even.
\end{proof}
 
In \cite{j10}, Julien showed that if the complexity function $p(n)$ is bounded by $Cn$ for some constant $C$,
then the cohomology is rationally finitely generated. 
This is the case for the global random substitution \cite{gm13} where $p(n)$ is bounded, and
hence the cohomology is also rationally finitely generated.

However, in our case the system has positive entropy which means that $p(n)$ grows exponentially, and
$s(n)$ is nondecreasing. 
By Proposition \ref{rk} and Proposition \ref{inj}, the rank of the cohomology of $R_n$ grows without bound,
and the kernels of the bonding maps between them vanish. As a consequence, the direct limit cannot be
finitely generated and we can deduce the following.

\medskip

\begin{theorem}
The \v{C}ech cohomology group $H^1(\Omega)$ is not finitely generated.
\end{theorem}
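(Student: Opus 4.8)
The plan is to assemble the three structural facts already in hand—the identification of $H^1(\Omega)$ as a direct limit (Theorem \ref{invlim}), the rank formula of Proposition \ref{rk}, and the injectivity of the bonding maps (Proposition \ref{inj})—into a single statement: the directed system computing $H^1(\Omega)$ has rank tending to infinity with no collapse in the limit, which forces the limit group to fail to be finitely generated. The whole argument runs most cleanly after tensoring with $\mathbb{Q}$, where the cohomology groups of the $R_n$ become finite-dimensional vector spaces.

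First I would record that, by Theorem \ref{invlim} together with the continuity of \v{C}ech cohomology under inverse limits, $H^1(\Omega)$ is isomorphic to the direct limit of the system $H^1(R_1)\xrightarrow{\gamma_1^*}H^1(R_2)\xrightarrow{\gamma_2^*}\cdots$. Each $R_n$ is a finite connected graph, hence homotopy equivalent to a wedge of circles, so $H^1(R_n;\mathbb{Z})$ is free abelian; by Proposition \ref{rk} its rank is $s(n)+1$. In particular there is no torsion to track, and passing to $\mathbb{Q}$ coefficients will lose no information relevant to finite generation.

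Next I would show that $s(n)$ is unbounded. By Lemma \ref{wb} there are no weak bispecials, so $s(n+1)-s(n)=sb(n)\geq 0$ and $s(n)$ is nondecreasing. Since the local random inflation has positive entropy, the complexity $p(n)$ grows exponentially; but the telescoping identity $p(n)=p(1)+\sum_{k=1}^{n-1}s(k)$ shows that a bounded sequence $s$ would force $p(n)$ to grow at most linearly. Hence $s(n)$ is unbounded, and being nondecreasing it satisfies $s(n)\to\infty$, so by Proposition \ref{rk} the dimension of $H^1(R_n;\mathbb{Q})$ tends to infinity.

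Finally I would assemble the conclusion. By Proposition \ref{inj} each $\gamma_n^*$ is injective, and since the groups involved are free the tensored maps $\gamma_n^*\otimes\mathbb{Q}$ remain injective; therefore $H^1(\Omega;\mathbb{Q})\cong\varinjlim H^1(R_n;\mathbb{Q})$ is an increasing union of finite-dimensional subspaces whose dimensions $s(n)+1$ are unbounded, hence infinite-dimensional over $\mathbb{Q}$. A finitely generated abelian group $A$ has $A\otimes\mathbb{Q}$ finite-dimensional, so $H^1(\Omega)$ cannot be finitely generated. The only genuinely delicate point is the passage from positive entropy to the unboundedness of $s(n)$—ruling out that exponential growth of $p(n)$ could coexist with bounded first differences—but this is immediate from the telescoping identity above combined with monotonicity. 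The role of Proposition \ref{inj} is precisely to ensure that this growth in rank survives into the limit rather than being annihilated by the bonding maps.
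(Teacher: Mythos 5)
Your proof is correct and takes essentially the same route as the paper: the paper likewise assembles Theorem \ref{invlim}, Proposition \ref{rk}, and Proposition \ref{inj}, together with the monotonicity of $s(n)$ coming from Lemma \ref{wb} and the exponential growth of $p(n)$ from positive entropy, to conclude that the direct limit has unbounded rank and injective bonding maps, hence cannot be finitely generated. Your two refinements---the telescoping identity $p(n)=p(1)+\sum_{k=1}^{n-1}s(k)$ ruling out bounded $s(n)$, and the passage to $\mathbb{Q}$-coefficients to convert unbounded rank into non-finite generation---simply make explicit steps the paper leaves implicit.
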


\section*{Acknowledgement}

The authors would like to thank Paolo Bugarin, Greg Maloney, and Johan Nilsson for fruitful discussions. 
E.~Provido would also like to thank the Alexander von Humboldt Foundation for the generous support.
This work is also supported by DFG via the Collaborative Research Centre 701 and the 
Research Center for Mathematical Modelling (RCM\textsuperscript{2}) at Bielefeld
University.

\end{multicols}

\begin{thebibliography}{Z9}

\bibitem{ap98}
J.E.~Anderson and I.F.~Putnam, \emph{Ergodic Theory Dynam. Syst.} {\bf 18}, 509 (1998). 
DOI: 10.1017/S0143385798100457

\bibitem{gm13}
F.~G\"{a}hler and G.R. Maloney, \emph{Topology Appl.} {\bf 160}, 703 (2013).
DOI: 10.1016/j.topol.2013.01.019

\bibitem{gl89}
C. Godr\`{e}che and J.M. Luck, \emph{J. Stat. Phys.} {\bf 55}, 1 (1989).

DOI: 10.1007/BF01042590

\bibitem{j10}
A.~Julien, \emph{Ergodic Theory Dynam. Syst.} {\bf 30}, 489 (2010).

DOI: 10.1017/S0143385709000194

\bibitem{bm13}
M.~Baake and M.~Moll, in: \emph{Aperiodic Crystals}, DOI: 10.1007/978-94-007-6431-6\_3. Eds. S. Schmid, R.L. Withers, R. Lifshitz, Springer, Dordrecht 2013, p. 19.

\bibitem{m14}
M. Moll, Ph.D. Thesis, Universit\"{a}t Bielefeld, Bielefeld 2013.

http://pub.uni-bielefeld.de/publication/2637807

\bibitem{n10}
J.~Nilsson, \emph{Monatsh. Math.} {\bf 168}, 563-577 (2012).
DOI:10.1007/s00605-012-0401-1 

\bibitem{c95}
J. Cassaigne, in \emph{Developments in Language Theory II, At the Crossroads of Mathematics, Computer Science and Biology}, ISBN 981-02-2682-9. Eds. J. Dassow, G. Rozenberg, A. Salomaa, World Scientific, Singapore 1996, p 25. 


\bibitem{bs11}
M.~Barge and L.~Sadun, Quotient cohomology for tiling spaces. \emph{New York J. Math.} {\bf 17}, 579-599 (2011). 
http://nyjm.albany.edu/j/2011/17-25.html


\end{thebibliography}
\end{document}